\documentclass[twoside,12pt]{article}
\usepackage{amsmath}
\usepackage{amsthm, amscd, amsfonts, amssymb, graphicx, color}
\usepackage[pagebackref=false,colorlinks,linkcolor=red,citecolor=green]{hyperref}
\usepackage{graphicx}
 \textwidth=15.5cm
 \textheight=19.5cm
 \oddsidemargin=0.5cm
 \evensidemargin=0.5cm
\pagestyle{myheadings}
\def\bd{\begin{definition}}
\def\ed{\end{definition}}
\def\bth{\begin{theorem}}
\def\eth{\end{theorem}}
\def\bpf{\begin{proof}}
\def\epf{\end{proof}}
\def\bco{\begin{corollary}}
\def\eco{\end{corollary}}
\def\ble{\begin{lemma}}
\def\ele{\end{lemma}}
\def\bpr{\begin{proposition}}
\def\epr{\end{proposition}}
\newcommand{\be}{\begin{equation}}
\newcommand{\ee}{\end{equation}}
\newcommand{\bes}{\begin{equation*}}
\newcommand{\ees}{\end{equation*}}
\newcommand{\br}{\begin{remark}}
\newcommand{\er}{\end{remark}}
\newcommand{\ben}{\begin{enumerate}}
\newcommand{\een}{\end{enumerate}}
\newcommand{\beq}{\begin{eqnarray}}
\newcommand{\eeq}{\end{eqnarray}}
\newcommand{\beqn}{\begin{eqnarray*}}
\newcommand{\eeqn}{\end{eqnarray*}}

\def\pix{\frac{\partial}{\partial x^i}} 

\def\nn{\nonumber}
\newtheorem{theorem}{Theorem}[section]
\newtheorem{lemma}[theorem]{Lemma}
\newtheorem{proposition}[theorem]{Proposition}
\newtheorem{corollary}[theorem]{Corollary}
\theoremstyle{definition}
\newtheorem{definition}[theorem]{Definition}

\theoremstyle{remark}
\newtheorem{remark}[theorem]{Remark}
\numberwithin{equation}{section}
\newcounter{tmp}


\newcounter{alphthm}
\setcounter{alphthm}{0}


\begin{document}
\setcounter{page}{1}
\setlength{\unitlength}{10mm}
\title{Harmonic vector fields and the Hodge Laplacian operator on Finsler geometry}
\author { \small Behroz Bidabad\thanks{The corresponding author, bidabad@aut.ac.ir, and  behroz.bidabad@math.univ-toulouse.fr} \, and Mir Ahmad Mirshafeazadeh}
\date{}
\maketitle
\begin{abstract}
We first present the natural definitions of the horizontal differential, the divergence (as an adjoint operator) and a $p$-harmonic form on a Finsler manifold.  Next, we prove a Hodge-type theorem for a Finsler manifold in the sense that a horizontal $p$-form is harmonic if and only if the horizontal Laplacian vanishes.
This viewpoint provides a new appropriate natural definition of harmonic vector fields in Finsler geometry.
  This approach leads to a Bochner-Yano type classification theorem based on the harmonic Ricci scalar.
 Finally, we show  that a closed orientable Finsler manifold with a positive harmonic Ricci scalar has zero Betti number.
\end{abstract}
\begin{center}
\textbf{Abrégé}
\end{center}
Nous présentons d'abord les définitions naturelles de la différentielle horizontale, de la divergence (comme opérateur adjoint) et d'une forme $p$-harmonique sur une variété  finslérienne. Ensuite, nous prouvons un théorème de type Hodge pour une variété finslérienne dans le sens où une $p$-forme horizontale est harmonique si et seulement si le Laplacien horizontal est nul.
Ce point de vue fournit une nouvelle définition naturelle appropriée des champs de vecteurs harmoniques en géométrie finslérienne.
  Cette méthode conduit à un théorème de classification de type Bochner-Yano basé sur le scalaire de Ricci harmonique.
 Enfin, nous montrons qu'une variété finslérienne  fermée et orientable avec un scalaire de Ricci harmonique positif, a un nombre de Betti nul.\\
 
\leftline{{\textbf{AMS Subject Classification 2020}: 58B20.
}}
\vspace{0.5cm}\textbf{Keywords}:
 Finsler;  harmonic p-form; Laplacian; Hodge's theorem; harmonic vector field; divergence; Bochner theorem; Betti number.

\section{Introduction}
The existence of harmonic vector fields on the Riemannian manifolds is directly related to the sign of the Ricci tensor. Bochner and Yano have studied the non-existence of harmonic vector fields on the compact Riemannian manifolds with positive Ricci curvature based on the Laplace-Beltrami operator. Next, Bochner proved that if the Ricci curvature on a Riemannian manifold is positive-definite, then all harmonic vector fields vanish \cite{B}. Yano proved that a vector field $X$ is harmonic, if and only if the Laplacian of its corresponding 1-form vanishes \cite{Y, Y1}.

 In Finsler geometry, Akbar-Zadeh introduced the divergence of horizontal and vertical 1-forms on $SM$  without defining the harmonic forms on a Finsler manifold, where $SM:=\bigcup\limits _{x\in M} S_xM$ and $S_xM:=\{y\in T_xM | F(y)=1\}$,  \cite{Ak}.

Harmonic forms in Finsler geometry are studied in \cite{BL,BR,CT,QW}. Recently, the second author introduced a definition of harmonic vector fields on a Finsler manifold, which is slightly modified here in the present work, see \cite{SB1,SB2}, and Remark \ref{Rem;1} in this article.
Moreover  some natural extensions of Riemannian results, more or less linked to this question are studied in \cite{BS}.

In the present work, the horizontal differential operator $d_{\texttt{H}}$  and the horizontal co-differential operator $\delta_{\texttt{H}}$, are defined as adjoint operators.
The above operators provide a Finslerian version of a well-known Hodge theorem on the Riemannian manifolds in the following sense.
\begin{theorem}\label{Th;p-form}
Let  $(M, F)$ be a closed Finsler manifold. If $ \omega $ is a horizontal p-form on $ SM $, then
\begin{align}
 \Delta_{\texttt{H}} \ \omega=0 \quad \textrm{if and only if} \quad  d_{\texttt{H}}\ \omega=0, \ and \quad \delta_{\texttt{H}}\ \omega=0 .
\end{align}
\end{theorem}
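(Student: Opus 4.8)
The plan is to prove the standard Hodge-theoretic equivalence by reducing it to the positive-definiteness of an inner product on horizontal $p$-forms. The backward implication is immediate: if $d_{\texttt{H}}\omega = 0$ and $\delta_{\texttt{H}}\omega = 0$, then by the definition of the horizontal Laplacian $\Delta_{\texttt{H}} = d_{\texttt{H}}\delta_{\texttt{H}} + \delta_{\texttt{H}}d_{\texttt{H}}$ we get $\Delta_{\texttt{H}}\omega = 0$ directly, with no integration or compactness needed. The entire content of the theorem is therefore in the forward direction.

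For the forward direction I would exploit the adjointness established in Theorem~\ref{Th;p-form} (the named result asserting that $d_{\texttt{H}}$ and $\delta_{\texttt{H}}$ are adjoint operators with respect to the natural global inner product on $SM$, i.e. $\langle d_{\texttt{H}}\alpha, \beta\rangle = \langle \alpha, \delta_{\texttt{H}}\beta\rangle$). The closedness of $(M,F)$ guarantees that this global inner product — obtained by integrating the pointwise inner product against the canonical volume form on the compact unit sphere bundle $SM$ — is finite and, crucially, positive-definite, so that $\langle \varphi, \varphi\rangle = 0$ forces $\varphi = 0$. The key computation is to pair $\Delta_{\texttt{H}}\omega$ with $\omega$ itself:
\begin{align}
\langle \Delta_{\texttt{H}}\omega, \omega\rangle
= \langle (d_{\texttt{H}}\delta_{\texttt{H}} + \delta_{\texttt{H}}d_{\texttt{H}})\omega, \omega\rangle
= \langle \delta_{\texttt{H}}\omega, \delta_{\texttt{H}}\omega\rangle + \langle d_{\texttt{H}}\omega, d_{\texttt{H}}\omega\rangle.
\end{align}
Assuming $\Delta_{\texttt{H}}\omega = 0$, the left-hand side vanishes, leaving a sum of two non-negative terms equal to zero. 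By positive-definiteness each term vanishes separately, and hence $\|d_{\texttt{H}}\omega\|^2 = 0$ and $\|\delta_{\texttt{H}}\omega\|^2 = 0$, which yields $d_{\texttt{H}}\omega = 0$ and $\delta_{\texttt{H}}\omega = 0$.

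I expect the main obstacle to be purely technical rather than conceptual: justifying the integration-by-parts step that converts $\langle d_{\texttt{H}}\delta_{\texttt{H}}\omega, \omega\rangle$ into $\langle \delta_{\texttt{H}}\omega, \delta_{\texttt{H}}\omega\rangle$. In the Finsler setting this requires that the adjointness of $d_{\texttt{H}}$ and $\delta_{\texttt{H}}$ hold without boundary contributions, which is exactly where the hypothesis that $M$ is \emph{closed} (compact without boundary, so $SM$ is compact without boundary) enters — it ensures the horizontal divergence terms integrate to zero over $SM$. One must also take care that the horizontal exterior derivative and co-differential interact correctly with the Sasaki-type metric and the chosen volume element on $SM$, so that the formal adjoint relation of Theorem~\ref{Th;p-form} applies verbatim to both composite operators $d_{\texttt{H}}\delta_{\texttt{H}}$ and $\delta_{\texttt{H}}d_{\texttt{H}}$. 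Once the adjointness is invoked, the remainder is the routine Hodge argument sketched above.
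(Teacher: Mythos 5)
Your overall strategy coincides with the paper's: the backward implication is immediate from $\Delta_{\texttt{H}} = d_{\texttt{H}}\delta_{\texttt{H}} + \delta_{\texttt{H}}d_{\texttt{H}}$, and the forward implication comes from pairing $\Delta_{\texttt{H}}\omega$ with $\omega$, writing $(\Delta_{\texttt{H}}\omega,\omega) = (d_{\texttt{H}}\omega, d_{\texttt{H}}\omega) + (\delta_{\texttt{H}}\omega,\delta_{\texttt{H}}\omega)$ via adjointness, and concluding by positive-definiteness of the integrated inner product on the compact $SM$. That is exactly the structure of the paper's proof.

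However, there is a genuine gap: you invoke the adjointness relation $(d_{\texttt{H}}\varphi,\psi) = (\varphi, \delta_{\texttt{H}}\psi)$ as an already-established result, citing Theorem \ref{Th;p-form} itself --- but that is the very statement under proof, and the paper contains no separate, prior adjointness theorem to appeal to. Establishing this relation \emph{is} the body of the paper's proof, and it is not the routine ``integration by parts with no boundary contributions'' you describe. Concretely, one must (i) use the antisymmetry of the $(p+1)$-form $\psi$ to collapse the $p+1$ antisymmetrized terms in $d_{\texttt{H}}\varphi$ into a single term, turning the combinatorial factor $1/(p+1)!$ into $1/p!$; (ii) apply the Finslerian divergence formula $\int_{SM}(\nabla_j a^j - a^j\nabla_0 T_j)\,\eta = 0$ to the vector $a^i = \varphi_{i_1\ldots i_p}\psi^{i i_1 \ldots i_p}$ --- note the correction term $\nabla_0 T_j$ involving the Cartan tensor, which has no Riemannian analogue and is precisely why $\delta_{\texttt{H}}$ is defined with the extra term $\psi_{j j_1\ldots j_p}\nabla_0 T_i$ rather than as a bare divergence; and (iii) use metric compatibility of the Cartan connection to move the $g^{ij}$ factors through $\nabla$. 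Your closing remark that one must ``take care that the operators interact correctly with the Sasaki-type metric and the chosen volume element'' gestures at this difficulty but does not resolve it; without step (ii) in particular, the claim that $\delta_{\texttt{H}}$, with its Cartan-tensor term, is the true adjoint of $d_{\texttt{H}}$ is unverified, and the entire forward direction rests on it.
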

We can thus define harmonic $p$-forms naturally on a  Finsler manifold in the sense that, a horizontal p-form is harmonic if and only if the horizontal Laplacian vanishes.

The definition of harmonic p-forms on $SM$ will provide a new definition of a harmonic vector field on a Finsler manifold in the sense that, a vector field on $ (M, F) $  is harmonic if and only if the horizontal Laplacian vanishes.

Finally, we obtain a classification of harmonic vector fields based on the \emph{harmonic Ricci scalar} $\tilde{Ric}$ defined by the equation (\ref{de104}).
\begin{theorem}\label{theorem2}
Let $(M, F)$ be a closed  Finsler manifold and $X$  a harmonic vector field on $M.$
\begin{itemize}
\item[1.] If $ \tilde{Ric} = 0$, then $X$ is parallel.
\item[2.] If $ \tilde{Ric}>0 $, then $X$ vanishes.
\end{itemize}
\end{theorem}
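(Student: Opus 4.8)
The plan is to establish a Bochner--Weitzenb\"ock type identity for the horizontal Laplacian acting on the horizontal $1$-form $\omega$ associated to $X$, and then integrate it against $\omega$ over the closed unit sphere bundle $SM$. Since $X$ is harmonic, Theorem \ref{Th;p-form} gives $\Delta_{\texttt{H}}\omega = 0$, equivalently $d_{\texttt{H}}\omega = 0$ and $\delta_{\texttt{H}}\omega = 0$. The first step is to expand $\Delta_{\texttt{H}}\omega = (d_{\texttt{H}}\delta_{\texttt{H}} + \delta_{\texttt{H}}d_{\texttt{H}})\omega$ in a local adapted frame on $SM$, writing both $d_{\texttt{H}}$ and $\delta_{\texttt{H}}$ through the horizontal covariant derivative $\nabla_{\texttt{H}}$. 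Commuting the two horizontal derivatives that appear produces a term coming from the $hh$-curvature tensor of the Finsler connection; collecting everything should yield an identity of the shape
\be
\Delta_{\texttt{H}}\omega = \bar{\Delta}\,\omega + K\,\omega,
\ee
where $\bar{\Delta} = -\nabla_{\texttt{H}}^{\,i}\nabla_{\texttt{H}\,i}$ is the horizontal rough (connection) Laplacian and $K$ is the scalar obtained by the appropriate contraction of the $hh$-curvature, playing the role of the Ricci curvature in the classical Bochner technique.

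The second step is to pair this identity pointwise with $\omega$ and integrate over $SM$ against the natural volume element. On a closed manifold the adjointness of $d_{\texttt{H}}$ and $\delta_{\texttt{H}}$ established in Theorem \ref{Th;p-form} allows integration by parts with no boundary contribution, converting $\int_{SM}\langle\bar{\Delta}\omega,\omega\rangle$ into $\int_{SM}|\nabla_{\texttt{H}}\omega|^2$. Using $\Delta_{\texttt{H}}\omega = 0$, the integrated identity becomes
\be
0 = \int_{SM}|\nabla_{\texttt{H}}\omega|^2 + \int_{SM} K\,|\omega|^2 .
\ee

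The final step reads off the two conclusions. If $K = 0$, the identity forces $\int_{SM}|\nabla_{\texttt{H}}\omega|^2 = 0$, hence $\nabla_{\texttt{H}}\omega \equiv 0$, so $\omega$, and therefore $X$, is horizontally parallel. If $K > 0$, both terms on the right are nonnegative and must each vanish; then $\int_{SM}K\,|\omega|^2 = 0$ with $K > 0$ forces $\omega \equiv 0$, i.e. $X = 0$.

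I expect the main obstacle to be the first step: carefully carrying out the commutation of horizontal covariant derivatives on $SM$ and verifying that the resulting curvature contraction is exactly the scalar $K$ of the statement, with the sign convention that makes $K>0$ drive $X$ to zero. In the Finsler setting one must track the vertical dependence of the connection coefficients and confirm that the terms involving the nonlinear connection and the Cartan/Chern torsion either cancel or are absorbed cleanly into $\bar{\Delta}$ and $K$; this bookkeeping, rather than the integration argument, is where the real work lies.
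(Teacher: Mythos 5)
Your overall strategy (a Bochner-type integral formula followed by the sign argument) is the same in spirit as the paper's, and your final step coincides with how the paper concludes from its identity $\int_{SM}(K+||\nabla X||^{2})\,\eta=0$. But there is a genuine gap in your first step, and it is not mere bookkeeping. The Weitzenb\"ock identity you posit, $\Delta_{\texttt{H}}\omega=\bar{\Delta}\omega+K\omega$ with $K$ ``the scalar obtained by the appropriate contraction of the $hh$-curvature,'' does not exist in the Finsler setting. When one actually commutes the horizontal derivatives (the paper's equation (\ref{de10}) combined with the Ricci identity (\ref{de102})), one obtains equation (\ref{de103}): besides the corrected rough-Laplacian term $g^{rs}(\nabla_{r}\nabla_{s}\varphi_{i}-\nabla_{s}\varphi_{i}\nabla_{0}T_{r})$ and the Ricci-type term $\varphi^{t}R_{ti}$, there appear the term $-\dot{\nabla}_{t}\varphi^{r}R^{t}_{ri}$, which is first order in the \emph{vertical} derivatives of $\omega$, and the Cartan-torsion term $\varphi^{r}\nabla_{i}\nabla_{0}T_{r}$. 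Neither of these is zeroth order in $\omega$ (so neither can be absorbed into $K\omega$ for any scalar $K$ independent of $\omega$), and neither is a second-order horizontal operator (so neither can be absorbed into $\bar{\Delta}$). This is exactly the point you flagged as ``where the real work lies'' and left unresolved; in fact the terms do not cancel, and with $K$ a function of the metric alone the identity you need cannot be established.

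The paper resolves this precisely by giving up on such a $K$: in (\ref{de104}) it \emph{defines} $K=X^{k}X^{t}R_{tk}-X^{k}\dot{\nabla}_{r}X^{j}R^{r}_{jk}-X^{k}\nabla_{k}X^{j}\nabla_{0}T_{j}$, a quantity depending on $X$, its vertical derivatives, and the Cartan tensor, not on the curvature alone (the phrase ``determined by the curvature tensor'' in the statement is loose). Moreover, instead of pairing a pointwise Weitzenb\"ock formula with $\omega$, the paper uses Yano's device: it forms the auxiliary 1-forms $Y=(X^{k}\nabla_{k}X_{i})\,dx^{i}$ and $Z=(X_{i}\nabla_{j}X^{j})\,dx^{i}$, computes $\delta Z-\delta Y$ from (\ref{de11}), inserts the Ricci identity and the relation $\nabla_{j}X^{k}\nabla_{k}X^{j}=||\nabla X||^{2}-2||d_{\texttt{H}}X||^{2}$, uses harmonicity ($d_{\texttt{H}}X=0$ and $\delta_{\texttt{H}}X=0$, via Theorem \ref{Th;p-form}) to remove the remaining terms, and integrates with the divergence formula (\ref{de1}) to reach $\int_{SM}(K+||\nabla X||^{2})\,\eta=0$. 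Your pairing-and-integration plan could be repaired to arrive at the same identity (multiply (\ref{de103}) by $X^{i}$ and integrate by parts using (\ref{de1}) and $\delta_{\texttt{H}}X=0$), but only after replacing your curvature-scalar $K$ by the paper's $X$-dependent expression; as written, your step 1 fails and with it statement 2, whose hypothesis $K>0$ means something different for your $K$ than for the $K$ the theorem actually concerns.
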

This theorem is an extension of a well-known result obtained by Bochner and Yano, see  page \pageref{Th;A}, Theorem \ref{Th;A}.
 Finally, this brings us to the following fundamental results.
 \begingroup
\setcounter{tmp}{\value{theorem}}
\setcounter{theorem}{2} 
\begin{theorem}\label{Th;harmonic}
Let $ (M, F) $ be a Finsler manifold. Every cohomology class $ H^{1}(M)$ contains a unique harmonic representative.
\end{theorem}
\begin{corollary}\label{Cor;BetiNum}
In a  closed orientable Finsler manifold with a positive harmonic Ricci scalar $ \tilde{Ric} >0$, the  first Betti number vanishes.
\end{corollary}
\endgroup
In Section 2, the necessary tools, concepts and definitions of Finsler geometry using the Cartan connection are stated. In Section 3, the definition of $ \Lambda^{H}_{p}(SM) $  the space of horizontal p-forms and  the definition of $ d_{H} $  the horizontal divergence operator
 on the unit fiber bundle $ SM $ with an inner product $ (., .) $ on $ \Lambda^{H}_{p}(SM) $ are expressed.
In section 4, the definition of the horizontal (co-differential) divergence, a horizontal Laplacian and a new type of harmonic p-form are introduced using the horizontal Laplacian. Section 5 deals with harmonic vector fields on Finsler manifolds where the proof of Theorem 1.2 is presented. In Section 6, we prove that a closed orientable Finsler manifold with a positive harmonic Ricci scalar has zero Betti number.
\section{Preliminaries and notations}
We first recall some Riemannian definitions of harmonic analysis.
Let $ (M, g) $ be a compact and orientable Riemannian manifold  of dimension $n$. A $p$-form on $ (M, g)$ for $1\leq p \leq n$  is given by
\begin{align*}
\varphi=\frac{1}{p!}\varphi_{i_{_{1}}...i_{p}}dx^{i_{_{1}}}\wedge ...\wedge dx^{i_{p}},
\end{align*}
where the indices  $i_1,...,i_p$ run over the range $1,...,n$ and the coefficients are components of  the skew-symmetric tensor fields of type $(0,p)$. The differential $ d\varphi $ is a
$(p+1)-$form   given by
\begin{align*}
 d\varphi=\frac{1}{(p+1)!}(\nabla_{i}\varphi_{i_{_{1}}...i_p}-\nabla_{i_{_{1}}}\varphi_{ii_{2}..i_{p}}-...
 -\nabla_{i_{p}}\varphi_{i_{_{1}}..i_{p-1} i})dx^{i}\wedge dx^{i_{1}}\wedge...\wedge dx^{i_{p}},
\end{align*}
where the coefficients are components of the skew-symmetric tensor fields of type $(0,p+1)$ and $\nabla_{j}$ are the components of Levi-Civita covariant derivative.
The co-differential $\delta\varphi $   is a $(p-1)-$form given by
\begin{align*}
\delta\varphi=-\frac{1}{(p-1)!}g^{ji}\nabla_{j}\varphi_{ii_{_{2}}..i_{p}} dx^{i_{2}}\wedge...\wedge dx^{i_{p}},
\end{align*}
where the coefficients are components of the skew-symmetric tensor fields of type $(0,p-1)$. The co-differential of a scalar function is defined to be zero.
It is easy to verify that $ d(d\varphi)=0 $ and $ \delta(\delta\varphi)=0 $, see for instance \cite{Y}.
In Riemannian geometry  a differential form $ \varphi $ is called \emph{harmonic} if it satisfies $ d\varphi=0 $ and $ \delta\varphi=0$.  A vector field $ X $ is said to be \emph{harmonic} if its associated 1-form is  harmonic. It is well known that a necessary and sufficient condition for a p-form $ \varphi $ to be harmonic is
\begin{align}
\Delta \varphi=(\delta d+d \delta)\varphi=0,
\end{align}
where $ \Delta $ is called Laplacian, see \cite{Y} for more details.

We then turn to the more general cases of Finsler manifolds. Let $M$ be a connected differentiable manifold,  $ \pi:TM_{0}\rightarrow M $ the bundle of non-zero tangent vector where $TM_0=TM \backslash 0$ is  the entire slit tangent bundle. A point of $TM$ is denoted
by $z=(x,y)$, where $x\in M$ and $y\in T_{x}M$. Let $(x^i)$ be a local chart with the domain $U\subseteq M$ and $(x^i,y^i)$ the induced local coordinates on $\pi^{-1}(U)$, where ${\bf y}=y^i\frac{\partial}{\partial x^i}\in T_{\pi z}M$, and $i$ running over the range $1,2,...,n$. A (globally defined) \emph{Finsler structure} on $M$ is a function $F:TM\longrightarrow [0,\infty)$ with the following properties;
$F$ is $C^\infty$ on the entire slit tangent bundle $TM \backslash 0 $;
$F(x,\lambda y )=\lambda F(x,y) ~ \forall \lambda >0$;
the $n\times n$ \emph{Hessian matrix} $(g_{ij}) =\frac {1}{2} ([F^2]_{y^i y^j})$ is positive-definite at every point of $TM_0$.
The pair $(M,g)$ is called a {\it Finsler manifold}, cf.  \cite{BCS}. Denote by $TTM_0$ and $SM$ the tangent bundle of $TM_0$ and the sphere bundle respectively, where $SM:=\bigcup\limits _{x\in M} S_xM$ and $S_xM:=\{y\in T_xM | F(y)=1\}$.

Let us consider the natural projection $ p: SM\rightarrow M$ which pulls back the tangent bundle $TM$ to an n-dimensional vector bundle $p^{*}TM$ over the $(2n-1)-$dimensional base $ SM $.
Given the natural induced coordinates $(x_{i}, y_{i})$ on $TM$, the coefficients of spray vector field are defined  by  (cf. \cite[p. 32]{SS})
\begin{align}
G^{i}:=\frac{1}{4}g^{ih}(\frac{\partial ^{2}F^{2}}{\partial y^{h}\partial x^{j}}y^{j}-\frac{\partial F^{2}}{\partial x^{h}}).
\end{align}
The pair $\lbrace \frac{\delta}{\delta x^{i}}, \frac{\partial}{\partial y^{i}}\rbrace$ forms a \emph{horizontal} and \emph{vertical} frame for $TTM$, where $\frac{\delta}{\delta x^{i}}:=\frac{\partial}{\partial x^{i}}-N^{j}_{i}\frac{\partial}{\partial y^{j}}$, and $N^{j}_{i}:=\frac{\partial G^{j}}{\partial y ^{i}}$ are called the coefficients of nonlinear connection. The tangent bundle $TTM_{0}$ of $TM_{0}$ can be split into the direct sum of the horizontal part $HTM$ spanned by $ \lbrace \frac{\delta}{\delta x^{i}}\rbrace $ and the vertical part $ VTM $ spanned by $ \lbrace \frac{\partial}{\partial y^{i}}\rbrace. $ The dual basis of $\lbrace \frac{\delta}{\delta x^{i}}, \frac{\partial}{\partial y^{i}}\rbrace  $ is $\lbrace dx^{i}, \delta y^{i}\rbrace $, where
\begin{align}
\delta y^{i}:=dy^{i}+N^{i}_{j}dx^{j},
\end{align}
and we have the following Whitney sum cf. \cite[p. 29]{SS}.
\begin{equation}
\begin{aligned}
TTM_{0}&=HTM\oplus VTM=span\lbrace \frac{\delta}{\delta x^{i}}\rbrace \oplus span \lbrace \frac{\partial}{\partial y^{i}}\rbrace ,\\
T^{*}TM_{0}&=H^{*}TM\oplus V^{*}TM=span\lbrace dx^{i}\rbrace \oplus span \lbrace \delta y^{i}\rbrace.
\end{aligned}
\end{equation}
The \emph{Cartan connection} is a natural extension of the Riemannian connection,  which is metric compatible and semi-torsion free. For a global approach to the Cartan connection one can refer to \cite{Ak}. According to the definition, the 1-forms of Cartan connection with respect to the dual basis $ \lbrace dx^{i}, \delta y^{i}\rbrace $
are given by
\begin{align*}
\omega^{i}_{j}:=\Gamma^{i}_{jk}dx^{k}+C^{i}_{jk}\delta y^{k},
\end{align*}
where, $\Gamma^{i}_{jk}$ and $C^{i}_{jk}$ are the \emph{horizontal} and \emph{vertical coefficients} of Cartan connection respectively defined by
\begin{equation*}
\begin{aligned}
& \Gamma^{i}_{jk}:=\frac{1}{2}g^{il}(\delta _{j}g_{lk}+\delta_{k}g_{jl}-\delta _{l}g_{jk}),\quad  C^{i}_{jk}:=\frac{1}{2}g^{il}\dot{\partial}_{l}g_{jk},
\end{aligned}
\end{equation*}
and
$ \delta _{i}:=\frac{\delta}{\delta x^{i}} $,
$ \dot{\partial}_{i}:=\frac{\partial}{\partial y^{i}}.$
In local coordinates  we have
\begin{equation*}
\begin{aligned}
& \nabla_{k}\dot{\partial_{j}}=\Gamma^{i}_{jk}\dot{\partial _{j}},
& \dot{\nabla}_{k}\dot{\partial _{j}}=C^{i}_{jk}\dot{\partial_{j}},\\
& \nabla_{k} \delta _{j} =\Gamma ^{i}_{jk}\delta_{i},
& \dot{\nabla}_{k} \delta _{j}=C^{i}_{jk}\delta _{i},
\end{aligned}
\end{equation*}
where in,
$ \nabla_{k}:=\nabla_{\frac{\delta}{\delta x^{k}} }$,
$ \dot{\nabla}_{k}:=\nabla_{\frac{\partial}{\partial y^{k}}}.$\\
Let us consider the components of an arbitrary (2,2)-tensor field $T_{is}^{jk}$ on $TM.$
The  \textit{horizontal} and \textit{vertical} components of the Cartan connection of $T_{is}^{jk}$ in a local coordinates  are given respectively by
\begin{equation*}
\begin{aligned}
& \nabla_{h}T_{is}^{jk}=\delta_{h}T_{is}^{jk}-T_{ps}^{jk}\Gamma ^{p}_{ih}-T_{ip}^{jk}\Gamma ^{p}_{sh}+T_{is}^{pk}\Gamma ^{j}_{ph}+T_{is}^{jp}\Gamma ^{k}_{ph},\\
& \dot{\nabla}_{h}T_{is}^{jk}=\dot{\partial }_{h}T_{is}^{jk}-T_{ps}^{jk}C ^{p}_{ih}-T_{ip}^{jk}C ^{p}_{sh}+T_{is}^{pk}C ^{j}_{ph}+T_{is}^{jp}C^{k}_{ph}.
\end{aligned}
\end{equation*}
The \emph{curvature tensor} in Cartan connection  is given by the  \textit{hh-curvature}, \textit{hv-curvature} and \textit{vv-curvature} with the following components, cf.  \cite{Ak};
\begin{equation*}
\begin{aligned}
R^{h}_{kij}&=\delta_{i}\Gamma ^{h}_{jk}-\delta_{j}\Gamma ^{h}_{ik}+\Gamma ^{l}_{jk}\Gamma ^{h}_{il}-\Gamma^{l}_{ik}\Gamma ^{h}_{jl}+R^{l}_{ij}C^{h}_{lk},\\
P^{h}_{kij}&=\dot{\partial_{k}}\Gamma ^{h}_{ki}-\delta_{i}C^{h}_{kj}+\Gamma ^{r}_{ki}C^{h}_{rj}-C^{r}_{kj}\Gamma ^{h}_{rj}+\dot{\partial_{j}}N^{r}_{i}C^{h}_{kr},\\
Q^{h}_{kij}&=C^{h}_{rj}C^{r}_{ki}-C^{h}_{ri}C^{r}_{kj},
\end{aligned}
\end{equation*}
respectively where
\begin{align}
R^{i}_{jk}=\frac{\delta N^{i}_{j}}{\delta x^{k}}-\frac{\delta N^{i}_{k}}{\delta x^{j}}=y^{m}R^{i}_{mjk}.
\end{align}
Trace of the $ hh-curvature $ of Cartan connection is denoted by $R_{ij}:=R^{l}_{ilj}$, which is not symmetric in general.\\
Let $ (M, F) $ be a Finsler manifold, $ \pi:TM_{0}\rightarrow M $ the bundle of non-zero tangent vectors and $ \pi^{*}TM $ the pullback bundle. The tangent space $T_xM$, $x\in M$ can be considered as a fiber of the pullback bundle $\pi^{*}TM$. Therefore a section $X$ on $\pi^{*}TM$ is denoted by $X=X^i(x,y)\pix$. The \emph{Ricci identity} for Cartan connection is given by the following equation
\begin{align}\label{de102}
\nabla_{k}\nabla_{h}X^{i}-\nabla_{h}\nabla_{k}X^{i}=X^{r}R^{i}_{rkh}-\dot{\nabla}_{r}X^{i}R^{r}_{kh},
\end{align}
 cf. \cite{Ak}.
Now we are in a position to define some basic notions on harmonic forms on Finsler manifolds.
\section{The p-forms and horizontal operators}
Here and everywhere in this paper, we assume the differential manifold $M$ is compact and without boundary or simply closed.
Let $(M, F)$ be a closed  Finsler manifold, $u: M\rightarrow SM$ a unitary vector field and $\omega =u_{i}dx^{i}$ the corresponding 1-form on $M$. A \emph{ volume element} on $SM$ is given by
$\eta =\frac{(-1)^{\frac{n(n-1)}{2}}}{(n-1)!}\omega \wedge (d\omega)^{n-1},$ cf. \cite{Ak}.
We denote the space of all \emph{horizontal p-forms} on $SM$ by $\Lambda _{p}^{\texttt{H}}(SM)$ or simply $\Lambda _{p}^{\texttt{H}}$,
\begin{equation}
\Lambda ^{\texttt{H}}_{p}(SM):=\lbrace \varphi_{i_{_1}i_{_2}...i_{p}}(z)dx^{i_{_1}}\wedge dx^{i_{_2}}\wedge ... \wedge dx^{i_{p}}|\varphi_{i_{_1}i_{_2}...i_{p}}\in C^{\infty}(SM)\rbrace.
\end{equation}
Let $ \pi =a_{i}(z) dx^{i} $ be a horizontal 1-form on $ SM $. The \emph{co-differential} or \emph{divergence } of $ \pi $ concerning the Cartan connection is defined by
\begin{align}\label{de11}
 \delta \pi =-(\nabla ^{j}a_{j}-a_{j}\nabla_{0}T^{j}),
\end{align}
where, $T_{kij}=C_{kij}=\frac{1}{2}\frac{\partial g_{ij}}{\partial y^{k}},$
are the components of Cartan tensors and $\nabla_{0}=y^{i}\nabla_{i}$ cf.  \cite[p. 223]{Ak}. Also, we have
\begin{align}\label{de1}
\underset{SM}{\int} \delta \pi \ \eta =-\underset{SM}{\int}(\nabla ^{j}a_{j}-a_{j}\nabla_{0}T^{j}) \eta =-\underset{SM}{\int}(\nabla _{j}a^{j}-a^{j}\nabla_{0}T_{j}) \eta =0,
\end{align}
where $ a^{i}=g^{ij}a_{j} $, cf. \cite[p. 67]{Ak}. Let us denote the horizontal part of the differential $ d\pi $ by
\begin{align*}
\texttt{H}d\pi :=\frac{1}{2}(\nabla_{i}a_{j}-\nabla_{j}a_{i})(z)\ dx^{i}\wedge dx^{j},
\end{align*}
cf. \cite[p. 224]{Ak}. According to the above discussion, we are in a position to define a horizontal differential operator in  the following sense.
\bd
Let  $(M, F)$ be a Finsler manifold and $\varphi =\frac{1}{p!}\varphi_{i_{_1}...i_{p}}(z)dx^{i_{_1}}\wedge...\wedge dx^{i_{p}} \in \Lambda ^{H}_{p}$ a horizontal p-form on $SM$. A \emph{horizontal differential operator} is a differential operator on $SM$ given by
\begin{equation}
\begin{aligned}
d_{\texttt{H}}:& \Lambda _{p}^{\texttt{H}}\rightarrow \Lambda _{p+1}^{\texttt{H}}\\
& \varphi \rightarrow d_{\texttt{H}} \ \varphi,
\end{aligned}
\end{equation}
where, for $1\leq i, i_{k}\leq n$ and $1\leq k\leq p$,  we have
\begin{align}
 d_{\texttt{H}} \  \varphi=\frac{1}{(p+1)!}(\nabla_{i}\varphi_{i_{_{1}}...i_p}-\nabla_{i_{_{1}}}\varphi_{ii_{2}..i_{p}}-...
 -\nabla_{i_{p}}\varphi_{i_{_{1}}..i_{p-1}i})dx^{i}\wedge dx^{i_{1}}\wedge...\wedge dx^{i_{p}}.
\end{align}
\ed
Let $ \varphi $ and $ \pi $ be the  two arbitraries horizontal p-forms on
$SM$ with the components $\varphi_{i_{_{1}}...i_{p}}$ and $\pi_{i_{_{1}}...i_{p}}$,  respectively.  We consider an inner product $( . , . )$ on $ \Lambda _{p}^{\texttt{H}} $ as follows
\begin{align}\label{Def;InnerPro}
 (\varphi, \pi ):= \underset{SM}{\int} \ \frac{1}{p!} \ \varphi^{i_{_{1}}...i_{p}} \ \pi_{i_{_{1}}...i_{p}}\ \eta ,
\end{align}
where, $\varphi^{i_{_{1}}...i_{p}}=g^{i_{_{1}}j_{_{1}}}...g^{i_{p}j_{p}}\varphi_{j_{1}...j_{p}}$.
\section{The horizontal Laplacian and harmonic p-forms}
Using the above concepts, we   define the horizontal Laplacian. This definition of Laplacian is different from those given in \cite{Ak,BR} and \cite{SS}.

Let  $(M, F)$ be a Finsler manifold and $\psi$ a horizontal (p+1)-form on $SM$, given by
\begin{align*}
\psi =\frac{1}{(p+1)!}\psi_{ii_{_{1}}...i_{p}}dx^{i}\wedge dx^{i_{_{1}}}\wedge ...\wedge dx^{i_{p}}.
\end{align*}
 We define the \emph{horizontal divergence} (co-differential) of $\psi$ by
\begin{align}
(\delta_{\texttt{H}}\  \psi)_{j_{1}...j_{p}}:=-g^{ij}(\nabla _{i}\psi_{jj_{1}...j_{p}}-\psi_{jj_{1}...j_{p}}\nabla_{0}T_{i}).
\end{align}
\begin{remark}
 If $ \varphi $ is a horizontal 1-form on $ SM $, then $ \delta_{\texttt{H}}\  $  reduces to $ \delta $, and we have
\begin{align}\label{de101}
\delta_{\texttt{H}}\ \varphi=\delta\varphi=-(\nabla^{j}\varphi_{j}-\varphi_{j}\nabla_{0}T^{j}).
\end{align}
\end{remark}
\begin{definition}
Let  $(M, F)$ be a Finsler manifold. A \emph{horizontal Laplacian} on $SM$ is defined by
\begin{align}\label{de7}
\Delta_{\texttt{H}}:=d_{\texttt{H}}\delta_{\texttt{H}}+\delta_{\texttt{H}}d_{\texttt{H}},
\end{align}
where $d_{\texttt{H}}$ and $\delta_{\texttt{H}}$ are horizontal differential and horizontal co-differential operators on $SM$, respectively.
\end{definition}
  Now we are able to show the basic  equivalence relation
\begin{align}
 \Delta_{\texttt{H}} \ \omega=0 \quad \textrm{if and only if} \quad  d_{\texttt{H}}\ \omega=0, \ and \quad \delta_{\texttt{H}}\ \omega=0 ,
\end{align}
 in
  the following theorem.

\emph{\textbf{Proof of Theorem \ref{Th;p-form}.}}~
It is clear that if $ \delta_{\texttt{H}}=0 $ and  $ d_{\texttt{H}}\omega =0 $, then we have $ \Delta_{\texttt{H}}\ \omega=0 $. Conversely,  Let $\varphi =\frac{1}{p!}\varphi_{i_{_1}...i_{p}}(z)dx^{i_{_1}}\wedge...\wedge dx^{i_{p}} \in \Lambda ^{H}_{p}$ be a horizontal p-form on $SM$ and $\psi$ a horizontal (p+1)-form on $SM$, given by
\begin{align*}
\psi =\frac{1}{(p+1)!}\psi_{ii_{_{1}}...i_{p}}dx^{i}\wedge dx^{i_{_{1}}}\wedge ...\wedge dx^{i_{p}}.
\end{align*}
Antisymmetric property of p-forms yield
\begin{equation*}
\begin{aligned}
\nabla _{i_{k}}\varphi _{i_{1}...i_{k-1}ii_{k+1}...i_{p}}\psi ^{ii_{1}...i_{p}}&=\nabla _{i}\varphi _{i_{1}...i_{k-1}i_{k}i_{k+1}...i_{p}}\psi ^{i_{k}i_{1}...i_{k-1}ii_{k+1}...i_{p}}\\
&=(-1)^{k+(k-1)}\nabla _{i}\varphi _{i_{1}...i_{k-1}i_{k}i_{k+1}...i_{p}}\psi ^{ii_{1}...i_{k-1}i_{k}i_{k+1}...i_{p}}\\
&=-\nabla _{i}\varphi _{i_{1}...i_{k-1}i_{k}i_{k+1}...i_{p}}\psi ^{ii_{1}...i_{k-1}i_{k}i_{k+1}...i_{p}}.
\end{aligned}
\end{equation*}
Using the last equation and the inner product \eqref{Def;InnerPro} we have
\begin{equation}
\begin{aligned}\label{de2}
(d_{\texttt{H}}\ \varphi , \psi )&=\underset{SM}{\int}\frac{1}{(p+1)!}(\nabla_{i}\varphi_{i_{_{1}}...i_{p}}-...-\nabla_{i_{p}}\varphi_{i_{_{1}}...i_{p-1}i})\ \psi ^{ii_{_{1}}...i_{p}} \ \eta  \\
&=\underset{SM}{\int}\frac{1}{(p+1)!}(\nabla_{i}\varphi_{i_{_{1}}...i_{p}}+...+\nabla_{i}\varphi_{i_{_{1}}...i_{p}})\ \psi ^{ii_{_{1}}...i_{p}} \ \eta  \\
& =\underset{SM}{\int}\frac{1}{p!}\nabla_{i}\varphi_{i_{_{1}}...i_{p}} \ \psi^{ii_{_{1}}...i_{p}} \ \eta.
\end{aligned}
\end{equation}
Letting $ a^{i}=\varphi_{i_{1}...i_{p}}\psi ^{ii_{1}...i_{p}}$, equation (\ref{de1}) yields
\begin{align}\label{da1}
\underset{SM}{\int} \nabla_{i}(\varphi _{i_{_{1}}...i_{p}}\psi^{ii_{_{1}}...i_{p}})\eta =\underset{SM}{\int}\varphi _{i_{_{1}}...i_{p}}\psi ^{ii_{_{1}}...i_{p}}\nabla_{0}T_{i}\eta.
\end{align}
Replacing (\ref{da1}) in (\ref{de2}) and using the metric compatibility of Cartan connection yields
\begin{equation}\label{de3}
\begin{aligned}
p! (d_{\texttt{H}} \ \varphi, \psi )=&\underset{SM}{\int}\nabla_{i}(\varphi_{i_{_{1}}...i_{p}}\psi^{ii_{_{1}}...i_{p}})\eta -\underset{SM}{\int}\varphi_{i_{_{1}}...i_{p}}\nabla_{i}\psi^{ii_{_{1}}...i_{p}}\eta \\
= &\underset{SM}{\int}\varphi_{i_{_{1}}...i_{p}}\psi ^{ii_{_{1}}...i_{p}}\nabla_{0}T_{i}\eta -\underset{SM}{\int}\varphi_{i_{_{1}}...i_{p}}\nabla_{i}\psi^{ii_{_{1}}...i_{p}}\eta  \\
=& -\underset{SM}{\int} (\nabla_{i}\psi^{ii_{_{1}}...i_{p}}-\psi^{ii_{_{1}}...i_{p}}\nabla_{0}T_{i})\varphi_{i_{_{1}}...i_{p}}\eta \\
=& -\underset{SM}{\int}g^{ij}g^{i_{_{1}}j_{1}}...g^{i_{p}j_{p}}(\nabla_{i}\psi_{jj_{1}...j_{p}}-\psi_{jj_{1}...j_{p}}\nabla_{0}T_{i})\varphi_{i_{_{1}}...i_{p}}\eta.
\end{aligned}
\end{equation}
Therefore (\ref{de2}) becomes
\begin{equation*}
\begin{aligned}
p!(d_{\texttt{H}} \ \varphi, \psi)=& \underset{SM}{\int}g^{i_{_{1}}j_{1}}...g^{i_{p}j_{p}}
(\delta_{\texttt{H}}\psi)_{j_{1}...j_{p}} \ \varphi _{i_{_{1}}...i_{p}} \ \eta \\
=& p! (\delta_{\texttt{H}}\ \psi , \varphi ),
\end{aligned}
\end{equation*}
which yields
\begin{align}\label{de4}
(d_{\texttt{H}}\ \varphi , \psi )=(\varphi , \delta_{\texttt{H}}\ \psi).
\end{align}
If $ \varphi =\omega $ is a p-form and $ \psi =d_{H}\omega $, then the equation (\ref{de4}) yields
\begin{align}\label{de5}
(d_{\texttt{H}}\ \omega , d_{\texttt{H}}\ \omega)=(\omega , \delta_{\texttt{H}} d_{\texttt{H}}\ \omega).
\end{align}
If $ \varphi =\delta_{\texttt{H}}\ \omega $ and $ \psi=\omega $, using (\ref{de4}) we have
\begin{align}\label{de6}
(d_{\texttt{H}} \delta_{\texttt{H}}\ \omega , \omega)=(\delta_{\texttt{H}}\ \omega , \delta_{\texttt{H}}\ \omega).
\end{align}
 Through the equations  (\ref{de5}), (\ref{de6}) and  (\ref{de7}) we have
\begin{equation*}
\begin{aligned}
(\Delta_{\texttt{H}} \omega , \omega)=& (d_{\texttt{H}} \delta_{\texttt{H}}\ \omega ,\omega )+(\delta_{\texttt{H}} d_{\texttt{H}}\omega , \omega)\\
=& (\delta_{\texttt{H}}\ \omega,\delta_{\texttt{H}}\ \omega)+(d_{\texttt{H}}\ \omega ,d_{\texttt{H}}\  \omega)\geq 0.
\end{aligned}
\end{equation*}
If $\Delta_{\texttt{H}}\ \omega=0  $, we conclude that $ \delta_{\texttt{H}}\ \omega=0 $ and $ d_{\texttt{H}}\ \omega =0$
 which completes the proof.\hspace{\stretch{1}}$\Box$\\
\subsection{Horizontal Laplacian of p-forms}
Let $ \varphi $ be a horizontal p-form on $ SM $, by definitions of horizontal differential and co-differential we can easily see that
\begin{equation}\label{de8}
\begin{aligned}
\delta_{\texttt{H}}\ d_{\texttt{H}}\ \varphi =-\frac{1}{p!}& [(g^{rs}(\nabla_{r}\nabla_{s}\varphi_{i_{_{1}}...i_{p}}-\nabla_{s}\varphi_{i_{_{1}}...i_{p}}\nabla_{0}T_{r})\\
& -g^{rs}(\nabla_{r}\nabla_{i_{_{1}}}\varphi_{si_{2}...i_{p}}-\nabla_{i_{_{1}}}\varphi_{si_{2}...i_{p}}\nabla_{0}T_{r})\\
& -g^{rs}(\nabla_{r}\nabla_{i_{2}}\varphi_{i_{_{1}}si_{3}...i_{p}}-\nabla_{i_{2}}\varphi_{i_{_{1}}si_{3}...i_{p}}\nabla_{0}T_{r})-...\\
& -g^{rs}(\nabla_{r}\nabla_{i_{p}}\varphi_{i_{_{1}}...i_{p-1}s}-\nabla_{i_{p}}\varphi_{i_{_{1}}...i_{p-1}s}\nabla_{0}T_{r})]dx^{i_{_{1}}}\wedge ...\wedge dx^{i_{p}},
\end{aligned}
\end{equation}
and
\begin{align*}
\delta_{\texttt{H}}\ \varphi=-\frac{1}{(p-1)!}g^{rs}(\nabla_{r}\varphi_{si_{2}...i_{p}}-\varphi_{si_{2}...i_{p}}\nabla_{0}T_{r})dx^{i_{2}}\wedge ...\wedge dx^{i_{p}}.
\end{align*}
On the other hand, by definition we have
\begin{equation}\label{de9}
\begin{aligned}
d_{\texttt{H}}\ \delta_{\texttt{H}}\ \varphi =&-\frac{1}{p!}[g^{rs}(\nabla_{i_{_{1}}}\nabla_{r}\varphi_{si_{2}...i_{p}}-\nabla_{i_{_{1}}}(\varphi_{si_{2}...i_{p}}\nabla_{0}T_{r}))\\
& -g^{rs}(\nabla_{i_{2}}\nabla_{r}\varphi_{si_{_{1}}i_{3}...i_{p}}-\nabla_{i_{2}}(\varphi_{si_{_{1}}i_{3}...i_{p}}\nabla_{0}T_{r}))-...\\
& -g^{rs}(\nabla_{i_{p}}\nabla_{r}\varphi_{si_{2}...i_{p-1}i_{_{1}}}-\nabla_{i_{p}}(\varphi_{si_{2}...i_{p-1}i_{_{1}}}\nabla_{0}T_{r}))]dx^{i_{_{1}}}\wedge ...\wedge dx^{i_{p}}.
\end{aligned}
\end{equation}
The equations (\ref{de8}) and (\ref{de9}) yield
\begin{equation}
\begin{aligned}
(\delta_{\texttt{H}}\ d_{\texttt{H}}\ +d_{\texttt{H}}\ \delta_{\texttt{H}}\ )\varphi =&-\frac{1}{p!}[g^{rs}(\nabla_{r}\nabla_{s}\varphi_{i_{_{1}}...i_{p}}-\nabla_{s}\varphi_{i_{_{1}}...i_{p}}\nabla_{0}T_{r})\\
&-g^{rs}(\nabla_{r}\nabla_{i_{_{1}}}\varphi_{si_{2}...i_{p}}-\nabla_{i_{_{1}}}\nabla_{r}\varphi_{si_{2}...i_{p}})\\
&-g^{rs}(\nabla_{r}\nabla_{i_{2}}\varphi_{i_{_{1}}si_{3}...i_{p}}-\nabla_{i_{2}}\nabla_{r}\varphi_{i_{_{1}}si_{3}...i_{p}})-...\\
&-g^{rs}(\nabla_{r}\nabla_{i_{p}}\varphi_{i_{_{1}}...i_{p-1}s}-\nabla_{i_{p}}\nabla_{r}\varphi_{i_{_{1}}...i_{p-1}s})\\
&-g^{rs}(\varphi _{si_{2}...i_{p}}\nabla_{i_{_{1}}}\nabla_{0}T_{r}+\varphi _{i_{_{1}}si_{3}...i_{p}}\nabla_{i_{2}}\nabla_{0}T_{r}\\
& +...+\varphi _{i_{_{1}}...i_{p-1}s}\nabla_{i_{p}}\nabla_{0}T_{r})]dx^{i_{_{1}}}\wedge ...\wedge dx^{i_{p}}.
\end{aligned}
\end{equation}
In particular for an arbitrary horizontal 1-form $\varphi=\varphi_{i}(z)dx^{i}$  on $SM$, the above equation reduces to
\begin{equation}\label{de10}
\begin{aligned}
(\delta_{\texttt{H}}\ d_{\texttt{H}}  +d_{\texttt{H}}\ \delta_{\texttt{H}} )\varphi =&-[g^{rs}(\nabla_{r}\nabla_{s}\varphi_{i}-\nabla_{s}\varphi_{i}\nabla_{0}T_{r})\\
&-g^{rs}(\nabla_{r}\nabla_{i}\varphi_{s}-\nabla_{i}\nabla_{r}\varphi_{s})\\
&-g^{rs}(\varphi _{s}\nabla_{i}\nabla_{0}T_{r})]dx^{i}.
\end{aligned}
\end{equation}
This fact gives rise to a new definition of horizontal harmonic vector fields on Finsler manifolds.
\begin{definition}
A horizontal p-form $ \varphi $ on $ SM $ is called \emph{horizontally harmonic} if we have
\begin{align*}
\Delta_{\texttt{H}}\  \varphi =0.
\end{align*}
The horizontal harmonic p-forms will be referred to in the suite as h-harmonic p-forms or simply h-harmonic.
\end{definition}
\br
C. Bertrand and A. Rauzy, using a horizontal lift of a p-form on $M$ to $SM$ have defined the Laplacian on a Finsler manifold which is different from our point of view. More intuitively, they construct  a sub-elliptic operator on the associated unitary bundle and give  a lower bound for the first eigenvalue of this operator by using the horizontal Ricci tensor of the Berwald connection, see  \cite{BR}.
\er
\section{The harmonic vector fields on Finsler manifolds}
Recently, one of the present authors has introduced in a joint work a definition for harmonic vector fields on Finsler manifolds using the Cartan and Berwald connections in the following sense.
\br\label{Rem;1}
Let $(M, F)$ be a closed Finsler manifold. A vector field $ X=X^{i}\frac{\partial}{\partial X^{i}} $ on $M$  is called harmonic   if its corresponding horizontal 1-form $ X=X_{i}(z)dx^{i}$ on $SM$ satisfies $\Delta X=0$ or  $dX=0$ and $\delta X=0$, where
\begin{equation}
\begin{aligned}
dX&=\frac{1}{2}(D_{i}X_{j}-D_{j}X_{i})dx^{i}\wedge dx^{j}-\frac{\partial X_{i}}{\partial y^{j}}dx^{i}\wedge dy^{j},\\
\delta X&=-(\nabla ^{j}X_{j}-X_{j}\nabla _{0}T^{j})=-g^{ij}D_{i}X_{j},
\end{aligned}
\end{equation}
and $ \nabla $ and $ D $ are the covariant derivatives of Cartan and Berwald connections, respectively, cf.  \cite{SB1,SB2}.
\er
 The above definition of harmonic vector fields and the corresponding harmonic 1-forms have some inconveniences. First, it could not be easily extended to the harmonic p-forms on Finsler manifolds.  In particular, the occurrence of the mixed terms of differential and co-differential could not be readily established in the Finsler setting.  Second, the both Berwald's and Cartan's covariant derivatives must be considered in this calculations which needs more preliminaries for this definition. Finally,  contrary to the definition of harmonic vector fields on the Riemannian manifolds, we do not have the following proper bilateral relation in general;
\begin{align}
\Delta \varphi=d\delta \varphi +\delta d\varphi =0\quad \Longleftrightarrow \qquad    d\varphi =0\quad  \textrm{and} \quad \delta \varphi =0.
\end{align}
The remedy lies in a slight modification of definition in the following sense.
Let $X=X^i (x)\frac{\partial}{\partial x^i}$ be a vector field on $M$. One  can associate to $ X $ a 1-form  $\tilde{X}$ on $ SM$ defined by
\begin{align*}
\tilde{X} =X_i (z)dx^i + \dot{X}_i \frac{\delta y^{i}}{F},
\end{align*}
where
$ \dot{X}_i =\frac{1}{F}({\nabla_0 X_i -y_i \nabla _0 (y^j X_j)F^{-2}})$,
 and $z\in SM $ \cite{Ak}.
The horizontal part of the associated 1-form $\tilde{X}$ on $ SM $ is called \emph{associate horizontal 1-form} and denoted by $X=X_i (z)dx^i$.
\begin{definition}
Let $(M, F)$ be a Finsler manifold. A vector field $ X=X^{i}(x)\frac{\partial}{\partial x^{i}}$ on $M$ is called \emph{harmonic} related to the Finsler structure $F$ if the associate horizontal 1-form  $X=X_i (z)dx^i$  is $h$-harmonic on $SM$.
\end{definition}
\begin{remark}\label{R1}
According to this definition of the Finslerian harmonic vector field,  if $ X $ is a harmonic vector field concerning the Finsler structure $F$, then the associate horizontal 1-form  $X=X_i (z)dx^i$, is h-harmonic on $SM$, where     $X_i (z)$ is a real function on $SM$ and $z=(x,y)\in SM $.
\end{remark}
\begin{theorem}
Let $(M, F)$ be a closed Finsler manifold. A vector field $\varphi=\varphi^{i}\frac{\partial}{\partial x^{i}}$  on $M$ is harmonic if and only if
\begin{align}\label{de103}
g^{rs}(\nabla_{r}\nabla_{s}\varphi_{i}-\nabla_{s}\varphi_{i}\nabla_{0}T_{r})=\varphi ^{t}R_{ti}-\dot{\nabla}_{t}\varphi^{r}R^{t}_{ri}+\varphi^{r}\nabla_{i}\nabla_{0}T_{r}.
\end{align}
\end{theorem}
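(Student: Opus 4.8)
The plan is to read the harmonicity condition straight off the local expression (\ref{de10}) and then convert the antisymmetrized second-derivative term into curvature via the Ricci identity. By definition $\varphi$ is harmonic exactly when its horizontal $1$-form is h-harmonic, i.e. $\Delta_{\texttt{H}}\varphi=0$. Since (\ref{de10}) exhibits $\Delta_{\texttt{H}}\varphi=(\delta_{\texttt{H}}d_{\texttt{H}}+d_{\texttt{H}}\delta_{\texttt{H}})\varphi$ as a single $dx^i$-coefficient, the equation $\Delta_{\texttt{H}}\varphi=0$ is equivalent to the vanishing of that coefficient, namely
\begin{align*}
g^{rs}(\nabla_r\nabla_s\varphi_i-\nabla_s\varphi_i\nabla_0 T_r)-g^{rs}(\nabla_r\nabla_i\varphi_s-\nabla_i\nabla_r\varphi_s)-g^{rs}\varphi_s\nabla_i\nabla_0 T_r=0,
\end{align*}
so the whole matter reduces to rewriting the middle term.

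First I would establish the covariant form of the Ricci identity (\ref{de102}). Pairing an arbitrary section $X^s$ with $\varphi_s$ into the scalar $X^s\varphi_s$, applying (\ref{de102}) together with the scalar commutation rule $\nabla_k\nabla_h f-\nabla_h\nabla_k f=-\dot{\nabla}_r f\,R^r_{kh}$ of the Cartan connection, and cancelling the common $\dot{\nabla}_r X^s\,R^r_{kh}$ terms, yields for a horizontal covector
\begin{align*}
\nabla_k\nabla_h\varphi_s-\nabla_h\nabla_k\varphi_s=-\varphi_t R^t_{skh}-\dot{\nabla}_t\varphi_s R^t_{kh}.
\end{align*}
Setting $k=r$, $h=i$ and contracting with $g^{rs}$ then replaces the middle term by $-g^{rs}\varphi_t R^t_{sri}-g^{rs}\dot{\nabla}_t\varphi_s R^t_{ri}$.

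Next I would simplify the two contractions using metric-compatibility of the Cartan connection. Because the totally symmetric Cartan tensor makes $\dot{\nabla}_k g_{ij}=0$, one may pull $g^{rs}$ through the vertical derivative to get $-g^{rs}\dot{\nabla}_t\varphi_s R^t_{ri}=-\dot{\nabla}_t\varphi^r R^t_{ri}$, which is the second term on the right of (\ref{de103}). For the curvature term, lowering the upper index and invoking the antisymmetry $R_{hkij}=-R_{khij}$ of the hh-curvature in its first pair of indices turns the contraction $g^{rs}R_{csri}$ into the Ricci-trace contraction; from the definition $R_{ij}=R^l_{ilj}$ one finds $g^{rs}R_{csri}=-R_{ci}$, hence $-g^{rs}\varphi_t R^t_{sri}=\varphi^t R_{ti}$. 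Substituting these and writing $g^{rs}\varphi_s\nabla_i\nabla_0 T_r=\varphi^r\nabla_i\nabla_0 T_r$ converts the displayed vanishing condition into exactly (\ref{de103}). Since every step after reading off (\ref{de10}) is an identity, the chain is reversible and gives the stated equivalence.

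The main obstacle I anticipate is the curvature bookkeeping: deriving the covariant Ricci identity with the correct signs and, in particular, checking that $g^{rs}R^t_{sri}$ genuinely reproduces the (non-symmetric) Finslerian Ricci tensor $R_{ti}$ rather than some other trace. This rests on the correct antisymmetry of the Cartan hh-curvature and on the index convention in $R_{ij}=R^l_{ilj}$, while the metric-compatibility $\nabla g=\dot{\nabla}g=0$ is what makes raising and lowering indices through both covariant derivatives legitimate.
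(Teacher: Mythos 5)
Your proposal is correct and takes essentially the same route as the paper: both read harmonicity off the vanishing of the $dx^{i}$-coefficient in (\ref{de10}) and convert the commutator term $g^{rs}(\nabla_{r}\nabla_{i}\varphi_{s}-\nabla_{i}\nabla_{r}\varphi_{s})$ into $\varphi^{t}R_{ti}-\dot{\nabla}_{t}\varphi^{r}R^{t}_{ri}$ via the Ricci identity (\ref{de102}). The only difference is bookkeeping: the paper first raises the index using metric compatibility, so that (\ref{de102}) applies verbatim to $\nabla_{r}\nabla_{i}\varphi^{r}-\nabla_{i}\nabla_{r}\varphi^{r}$ and the trace $R^{r}_{tri}=R_{ti}$ appears at once, whereas you derive a covariant-index Ricci identity and then invoke the antisymmetry $R_{hkij}=-R_{khij}$, a correct but slightly more roundabout path to the same contraction.
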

\begin{proof}
The Ricci identity (\ref{de102}) yields
\begin{equation}
\begin{aligned}
g^{rs}(\nabla_{r}\nabla_{i}\varphi_{s}-\nabla_{i}\nabla_{r}\varphi_{s})&= \nabla_{r}\nabla_{i}\varphi^{r}-\nabla_{i}\nabla_{r}\varphi^{r}\\
&=\varphi^{t}R^{r}_{tri}-\dot{\nabla}_{t}\varphi^{r}R^{t}_{ri}\\
&=\varphi^{t}R_{ti}-\dot{\nabla}_{t}\varphi^{r}R^{t}_{ri}.
\end{aligned}
\end{equation}
Substituting the last equation in (\ref{de10}) we get the result.
\end{proof}
A Finsler manifold $(M, F)$  is called a \emph{Landsberg manifold} if $\nabla_{0}T=0$ . We have the following corollary.
\begin{corollary}
Let $(M, F)$ be a closed Landsberg manifold. A vector field $\varphi=\varphi^{i}\frac{\partial}{\partial x^{i}}$
on $M$ is harmonic if and only if
\begin{align*}
g^{rs}\nabla_{r}\nabla_{s}\varphi_{i}=\varphi ^{t}R_{ti}-\dot{\nabla}_{t}\varphi^{r}R^{t}_{ri}.
\end{align*}
If $ (M, F) $ is Riemannian, then the above equation reduces to the following well known form.
\begin{align*}
g^{rs}\nabla_{r}\nabla_{s}\varphi_{i}=\varphi ^{t}R_{ti}.
\end{align*}
 \end{corollary}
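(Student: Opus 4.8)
The plan is to obtain the Landsberg formula as a direct specialization of the harmonicity identity (\ref{de103}) established in the preceding theorem, the whole point being that on a Landsberg manifold the two correction terms built from the mean Cartan torsion drop out. First I would recall that $T_r$ is the trace of the Cartan tensor, $T_r = g^{ij}T_{ijr} = g^{ij}C_{ijr}$, and that a Finsler manifold is \emph{Landsberg} precisely when its Landsberg tensor vanishes, that is $\nabla_0 C_{ijr} = 0$ identically on $SM$. Thus the harmonicity condition for $\varphi$ is already known (by the theorem) to be (\ref{de103}); it only remains to see how this collapses under the Landsberg hypothesis.

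The key step is to show that the mean-torsion term $\nabla_0 T_r$ vanishes. Since the Cartan connection is metric compatible, $\nabla_k g^{ij}=0$ and hence $\nabla_0 g^{ij}=0$, so the horizontal covariant derivative commutes with the metric trace and
\begin{align*}
\nabla_0 T_r = \nabla_0\big(g^{ij}C_{ijr}\big) = g^{ij}\,\nabla_0 C_{ijr} = 0
\end{align*}
on a Landsberg manifold. Because $\nabla_0 T_r$ vanishes as a tensor field on $SM$ (the Landsberg condition holds at every point, not merely at one), its further horizontal covariant derivative also vanishes, $\nabla_i \nabla_0 T_r = 0$. Substituting these two facts into (\ref{de103}) kills both the term $\nabla_s\varphi_i\,\nabla_0 T_r$ on the left-hand side and the term $\varphi^r\,\nabla_i\nabla_0 T_r$ on the right-hand side, leaving exactly
\begin{align*}
g^{rs}\nabla_r\nabla_s\varphi_i = \varphi^t R_{ti} - \dot{\nabla}_t\varphi^r R^t_{ri},
\end{align*}
which is the asserted Landsberg characterization.

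For the Riemannian reduction I would note that a Riemannian metric has vanishing Cartan tensor, $C_{ijk}=0$, so $M$ is trivially Landsberg and the displayed formula applies. Moreover, since $\varphi=\varphi^i(x)\,\partial/\partial x^i$ is $y$-independent and $C^r_{st}=0$, the vertical covariant derivative $\dot{\nabla}_t\varphi^r = \dot{\partial}_t\varphi^r + \varphi^s C^r_{st}$ vanishes, whence the curvature term $\dot{\nabla}_t\varphi^r R^t_{ri}$ also drops, giving the classical Bochner--Yano form $g^{rs}\nabla_r\nabla_s\varphi_i = \varphi^t R_{ti}$. The only delicate point in the whole argument is invoking the Landsberg condition in the precise form $\nabla_0 C_{ijr}=0$ and justifying that the metric trace commutes past $\nabla_0$; once that single identity $\nabla_0 T_r=0$ is granted, everything else is routine bookkeeping within (\ref{de103}).
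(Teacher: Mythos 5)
Your proposal is correct and follows exactly the route the paper intends: the corollary is stated without proof as an immediate specialization of the preceding theorem, with the Landsberg hypothesis killing both $\nabla_{0}T_{r}$ terms in (\ref{de103}), and the Riemannian case following from $C_{ijk}=0$ and the $y$-independence of $\varphi^{r}$, which gives $\dot{\nabla}_{t}\varphi^{r}=0$. Your write-up merely makes explicit (via $\nabla_{0}T_{r}=g^{ij}\nabla_{0}C_{ijr}=0$ and $\nabla_{i}\nabla_{0}T_{r}=0$) the steps the paper leaves to the reader.
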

Let $ X=X^{i}(x) \frac{\partial}{\partial x^{i}}$ be a vector field on $ (M, F)$.
Inspired by \cite{SB1} and \cite{SB2} and based on the Ricci tensor, we define the \textit{  harmonic Ricci scalar} $\tilde{Ric}$ as follows
\begin{align}\label{de104}
 \tilde{Ric}(X,X):=X^{k}X^{t}R_{tk}-X^{k}\dot{\nabla}_{r}X^{j}R^{r}_{jk}-X^{k}\nabla_{k}X^{j}\nabla_{0}T_{j}.
\end{align}
Furthermore, we obtain a  classification result given in Theorem \ref{theorem2}.\\

\emph{\textbf{Proof of Theorem \ref{theorem2}.}}~
Let $ X=X^{i}(x) \frac{\partial}{\partial x^{i}}$ be a vector field on $ (M, F)$  and $ Y $ and $ Z $ two 1-forms  on $ SM $ defined at $ z\in SM $ by $Y=(X^{k}\nabla_{k}X_{i})(z) dx^{i}$ and $ Z=(X_{i}\nabla_{j}X^{j})(z)dx^{i}$, respectively. Using (\ref{de11}) we have
\begin{equation}
\begin{aligned}
\delta Y=& -\nabla_{j}(X^{k}\nabla_{k}X^{j})+X^{k}\nabla_{k}X^{j}\nabla_{0}T_{j}\\
=& -\nabla_{j}X^{k}\nabla_{k}X^{j}-X^{k}\nabla_{j}\nabla_{k}X^{j}+X^{k}\nabla_{k}X^{j}\nabla_{0}T_{j},
\end{aligned}
\end{equation}
and similarly
\begin{equation}
\begin{aligned}
\delta Z=&-\nabla_{k}X^{k}\nabla_{j}X^{j}-X^{k}\nabla_{k}\nabla_{j}X^{j}+X^{k}\nabla_{j}X^{j}\nabla_{0}T_{k}\\
=& -\nabla_{k}X^{k}(\nabla_{j}X^{j}-X^{k}\nabla_{0}T_{k})-X^{k}\nabla_{k}\nabla_{j}X^{j}\\
=& \nabla_{k}X^{k}\delta X-X^{k}\nabla_{k}\nabla_{j}X^{j}.
\end{aligned}
\end{equation}
The difference of $ \delta Z $ and $ \delta Y$ yields
\begin{equation}\label{de12}
\begin{aligned}
\delta Z-\delta Y=& \nabla_{k}X^{k}\delta X+X^{k}(\nabla_{j}\nabla_{k}X^{j}-\nabla_{k}\nabla_{j}X^{j})\\
& +\nabla_{j}X^{k}\nabla_{k}X^{j}-X^{k}\nabla_{k}X^{j}\nabla_{0}T_{j}.
\end{aligned}
\end{equation}
On the other hand we have
\begin{align*}
d_{\texttt{H}}\ X=\frac{1}{2}(\nabla_{i}X_{j}-\nabla_{j}X_{i})dx^{i}\wedge dx^{j},
\end{align*}
from which
\begin{equation}
\begin{aligned}
||d_{\texttt{H}}\ X||^{2}&=\frac{1}{4}(\nabla_{i}X_{j}-\nabla_{j}X_{i})(\nabla ^{i}X^{j}-\nabla ^{j}X^{i})\\
&=\frac{1}{4}[(\nabla_{i}X_{j})(\nabla ^{i}X^{j})-(\nabla_{i}X_{j})(\nabla ^{j}X^{i})-(\nabla_{j}X_{i})(\nabla ^{i}X^{j})+(\nabla_{j}X_{i})(\nabla ^{j}X^{i})] \nn\\
&=\frac{1}{2}[||\nabla X||^{2}-(\nabla_{i}X_{j})(\nabla ^{j}X^{i})].\nn
\end{aligned}
\end{equation}
Therefore
\begin{align}\label{de13}
\nabla_{j}X^{k}\nabla_{k}X^{j}=||\nabla X||^{2}-2||d_{\texttt{H}}\ X||^2.
\end{align}
Replacing (\ref{de13}) and (\ref{de102}) in (\ref{de12}) we obtain
\begin{equation}\label{de20}
\begin{aligned}
\delta Z-\delta Y=& \nabla_{k}X^{k}\delta X+X^{k}X^{t}R_{tk}-X^{k}\dot{\nabla}_{r}X^{j}R^{r}_{jk}\\
& +||\nabla X||^{2}-2||d_{\texttt{H}}\ X||^2-X^{k}\nabla_{k}X^{j}\nabla_{0}T_{j}.
\end{aligned}
\end{equation}
If $ X $ is a harmonic vector field, then by definition of $\tilde{Ric}$ given by (\ref{de104}) the last equation becomes
\begin{align*}
\delta Z-\delta Y=||\nabla X||^{2}+\tilde{Ric}.
\end{align*}
By integration over $ SM $ and using (\ref{de1}), we obtain
\begin{align}\label{de20}
\underset{SM}\int(\tilde{Ric}+||\nabla X||^{2})\eta =0.
\end{align}
 If $ \tilde{Ric}=0, $ or
\begin{align*}
 X^{k}X^{t}R_{tk}= X^{k}\dot{\nabla}_{r}X^{j}R^{r}_{jk}+X^{k}\nabla_{k}X^{j}\nabla_{0}T_{j},
 \end{align*}
 then (\ref{de20}) yields the first assertion.
 If $ \tilde{Ric}>0, $ that is, if we have
  \begin{align*}
 X^{k}X^{t}R_{tk}>X^{k}\dot{\nabla}_{r}X^{j}R^{r}_{jk}+X^{k}\nabla_{k}X^{j}\nabla_{0}T_{j}, \end{align*}
then using the equation (\ref{de20}) we get the second assertion.
\hspace {\stretch{1}}$\Box$\\
\begin{remark}
For a closed  Landsberg manifold and a harmonic vector field  $X$  on $M$, Theorem \ref{theorem2} reads
\begin{itemize}
\item[1.] If $X^{k}X^{t}R_{tk}= X^{k}\dot{\nabla}_{r}X^{j}R^{r}_{jk} $, then $ X $ is parallel.
\item[2.] If $ X^{k}X^{t}R_{tk}>X^{k}\dot{\nabla}_{r}X^{j}R^{r}_{jk}$, then $ X $ vanishes.
\end{itemize}
\end{remark}
Recall that if the Finsler structure $F$ is Riemannian, then Theorem \ref{theorem2}  reduces to the following  famous theorem of Bochner and Yano.
\begingroup
\setcounter{tmp}{\value{theorem}}
\setcounter{theorem}{0} 
\renewcommand\thetheorem{\Alph{theorem}}
\begin{theorem}\label{Th;A}\cite{Y, Y1}
Let $(M, g)$ be a closed  Riemannian manifold and $X$  a harmonic vector field on $M.$
\begin{itemize}
\item[1.] If $Ric(X, X)=X^{k}X^{t}R_{tk}=0 $, then $ X $ is parallel.
\item[2.] If $ Ric(X, X)=X^{k}X^{t}R_{tk}>0$, then $ X $ vanishes.
\end{itemize}
\end{theorem}
\endgroup
\section{Cohomology class and Betti number}
On a smooth manifold $M$ the de Rham cohomology $ H^{1}_{dR}(M):= Z^{1}(M) / B^{1}(M)$, is an equivalence class  of the closed forms on $M$.
The fact that a closed form is not exact indicates that the manifold has a certain global topological structure that prevents the existence of any hole or twist. The de Rham cohomology class is therefore, a way to understand, via the tangent bundle, the global topology of a manifold.

On a compact Riemannian manifold,  every equivalence class in $H^k_{\mathrm{dR}}(M)$ contains exactly one harmonic form. That is, every member $  \omega $ of a given equivalence class of closed forms can be written as
$ {\displaystyle \omega =\alpha +\gamma }$
where ${\displaystyle \alpha }  $ is exact and $ \gamma $ is harmonic, i.e. $ {\displaystyle \Delta \gamma =0}$.

The dimension of the space of all harmonic forms of degree $p$ on a manifold $M$ is called the \emph{pth Betti number} of the manifold.

Due to Hodge theory, the first Betti number is equal to the dimension of the space of harmonic $1$-forms on $M$, and this space is isomorphic to $H^1_{dR}(M)$.

 As mentioned earlier, on a Finsler manifold $(M, F)$, a vector field is harmonic if  $X=X_i (x,y)dx^i$, the associate horizontal $1$-form  on $SM$,  is $h$-harmonic. Hence the definition of a harmonic form  on $(M, F)$  is closely related to the Finsler structure $F$.

%



The following theorem will be used in the sequel.
\begingroup
\setcounter{tmp}{\value{theorem}}
\setcounter{theorem}{1} 
\renewcommand\thetheorem{\Alph{theorem}}
\begin{theorem}\label{Pd}\cite{Pd}
If $ A $ is a closed, nonempty, convex subset of a Hilbert space $ B $, then  for every $ y $ in $ B $ there is a unique $ x $ in $ A $ that minimizes the distance from $ y $ to $ A. $
\end{theorem}
\endgroup


We are now able to prove  Theorem \ref{Th;harmonic}.

\emph{\textbf{Proof of Theorem \ref{Th;harmonic}.}}~
\emph{Uniqueness.}
Let $ (M, F) $ be a Finsler manifold,  $ \alpha^{(1)}=\alpha^{(1)}_{i}(x)dx^{i} $  and $ \alpha^{(2)}=\alpha^{(2)}_{i}(x)dx^{i} $  the two 1-forms on $M$ such that the associate horizontal 1-forms $ \alpha^{(1)}=\alpha^{(1)}_{i}(x, y)dx^{i} $ and
$\alpha^{(2)}=\alpha^{(2)}_{i}(x, y)dx^{i}$ on $SM$  are $h$-harmonic and $\alpha^{(1)}_{i}(x, y)dx^{i}-\alpha^{(2)}_{i}(x, y)dx^{i}=d_{H}f $  for some
 $f\in C^\infty(SM)$. Using the inner product   \eqref{Def;InnerPro} and the equation \eqref{de4}, we have
 \begin{align}\label{eqz1}
(\alpha^{(1)}_{i}(x, y)dx^{i}-\alpha^{(2)}_{i}(x, y)dx^{i},& \alpha^{(1)}_{i}(x, y)dx^{i}-\alpha^{(2)}_{i}(x, y)dx^{i})\\
&=(\alpha^{(1)}_{i}(x, y)dx^{i}-\alpha^{(2)}_{i}(x, y)dx^{i}, d_{H}f)\nn\\
&=(\delta_{H}(\alpha^{(1)}_{i}(x, y)dx^{i}-\alpha^{(2)}_{i}(x, y)dx^{i}), f)\nn\\
&=(0, f)=0, \nn
\end{align}
which yields  $ \alpha^{(1)}=\alpha^{(2)}. $\\
\emph{Existence.}
$ B^{1}(SM) $ is closed in $ Z^{1}(SM) $ and it is convex \cite{Pd}.\\
 Let $ \theta=\theta_{i}(x)dx^{i} \in Z^{1}(M) $  such that $ \theta=\theta_{i}(x, y)dx^{i} \in Z^{1}(SM)$ is the associate 1-form on $SM.$
 Using Theorem \ref{Pd}, three is a unique minimizer, say $f_{0}\in C^\infty(SM)$ such that $ ||\theta_{i}(x, y)dx^{i}-d_{H}f_{0}||^{2} $ is minimized. For all
$f\in C^\infty(SM)$ and $ t\in \mathbb{R}$ we have
\begin{align*}
&\frac{d}{dt}||\theta_{i}(x, y)dx^{i}-d_{H}f_{0}-td_{H}f||^{2}=\frac{d}{dt}(\theta_{i}(x, y)dx^{i}-d_{H}f_{0}-td_{H}f, \theta_{i}(x, y)dx^{i}-d_{H}f_{0}-td_{H}f)\\
&=\frac{d}{dt}[(\theta_{i}(x, y)dx^{i}-d_{H}f_{0}, \theta_{i}(x, y)dx^{i}-d_{H}f_{0})-2t(\theta_{i}(x, y)dx^{i}-d_{H}f_{0}, d_{H}f)\\
& +t^{2}(d_{H}f, d_{H}f)].
\end{align*} 																																									 Since $ ||\theta_{i}(x, y)dx^{i}-d_{H}f_{0}-td_{H}f||^{2} $ has a unique minimum at $ t=0 $, we deduce
\begin{equation}\label{eqz2}
(\theta_{i}(x, y)dx^{i}-d_{H}f_{0}, d_{H}f)=0,
\end{equation}
  for all $f\in C^\infty(SM).$
 On the other hand
\begin{equation}\label{eqz3}
(\theta_{i}(x, y)dx^{i}-d_{H}f_{0}, d_{H}f)=(\delta_{H}(\theta_{i}(x, y)dx^{i}-d_{H}f_{0}), f).
\end{equation}
The equations (\ref{eqz2}) and (\ref{eqz3}) yield $\delta_{H}(\theta_{i}(x, y)dx^{i}-d_{H}f_{0})=0$ and the proof is complete.
\\
\hspace {\stretch{1}}$\Box$\\
We then prove the corollary.\\
\emph{\textbf{Proof of Corollary\ref{Cor;BetiNum}.}}~
Let $ (M, F) $ be a closed orientable Finsler manifold and $X$ a harmonic vector field  related to $F$.
Assuming  $ \tilde{Ric} >0$, the second part of  Theorem \ref{theorem2} asserts  that the harmonic vector field $X$ related to $F$ vanishes identically. Theorem \ref{Th;harmonic} yields that the dimension of the space of all harmonic forms of degree one is the first Betti number of the manifold. Hence the first Betti number is $ b_{1}=0.$
\hspace {\stretch{1}}$\Box$\\

\vspace{2cm}

  Behroz Bidabad\\
Professor of Department of Mathematics and Computer Sciences\\
Amirkabir University of Technology (Tehran Polytechnic),
424 Hafez Ave. 15914 Tehran, Iran.
E-mail:  bidabad@aut.ac.ir\\
Associate Researcher of (IMT) Institut de Mathematique de Toulouse,\\
Universit\'{e} Paul Sabatier, 118 route de Narbonne - F-31062 Toulouse, France.\\
E-mail: behroz.bidabad@math.univ-toulouse.fr\\

Mir Ahmad Mirshafeazadeh,\\
Department of Mathematics, Shabestar Branch,
Islamic Azad University, Shabestar, Iran. E-mail: ah.mirshafeazadeh@gmail.com

\begin{thebibliography}{99}
\bibitem{Ak}H.  Akbar-Zadeh,  \textit{Inititiation to Global Finslerian Geometry}. vol. 68. Math. Library, Northholland 2006.
\bibitem{BCS}  D. Bao,  S. Chern, and  Z. Shen,  \textit{An Introduction to Riemann-Finsler geometry}. GTM 200, Springer-verlag  2000.

\bibitem{BL} D. Bao, and B. Lackey,  A Hodge decomposition theorem for Finsler spaces, \textit{Compte Rendu Acad. Sci. Paris,} t. 323, S\'{e}rie 1 51-56 (1996).

\bibitem{BR} C. Bertrant,  and A. Rauzy,  Valeurs propres d'operateurs diff\'{e}r{e}ntiels du second ordre sur une vari\'{e}t\'{e} de finsler, \emph{Bulletin des Sciences Mathematiques,} 122 (5), 399-408, (1998).

\bibitem{SB1} B. Bidabad,  and A. Shahi,  Harmonic vector fields on Landsberg manifolds. \textit{Compte Rendu Acad. Sci. Paris, Ser. I}, Vol. 352, 737-741 (2014).
\bibitem{SB2} B. Bidabad, and A. Shahi, Harmonic vector fields on Finsler manifolds, \emph{Compte Rendu Acad. Sci. Paris,} Ser. I, 354, 1, 101-106 (2016).
\bibitem{BS} B. Bidabad,   and  A. Shahi,  On Sobolev spaces and density theorems on Finsler manifolds, \emph{﻿AUT J. Math.  Com.}, 1(1) (2020)  37-45.
\bibitem{B} S. Bochner,  Vector fields and Ricci curvature, \emph{Bull. Amer. Math. Soc.} 52, 776-797 (1946).
\bibitem{CT}Z. Chunping, and Z. Tongde, Horizontal Laplace operator in real Finsler vector bundles, \emph{Acta Mathematica Scientia}, 28(1), 128-140 (2008).
\bibitem{Pd}  G. K. Pederson, \emph{Analysis Now}, Springer-Verlag, 1994.
\bibitem{QW} H. Qun, and Z. Wei,  Variation problems and E-valued horizontal harmonic forms
on Finsler manifolds,\emph{ Publ. Math. Debrecen,} 82/2, 325-339 (2013).
\bibitem{SS} Y.B. Shen,  and Z. Shen,  \emph{Introduction to modern Finsler geometry.} Higher Education Press, Beijing; World Scientific Publishing Co., Singapore, 2016.
\bibitem{Y}K. Yano,  Integral Formulas in Riemannian Geometry, Marcel Dekker, New York, (1970).
\bibitem{Y1}K. Yano,  On harmonic and Killing vector fields,\emph{ Ann. of Math.}, 55, 38-45 1952.
\end{thebibliography}
\end{document}